\newcommand{\AAA}{\mathfrak A}
\newcommand{\BBB}{\mathfrak B}
\newcommand{\CCC}{\mathcal C}
\newcommand{\DDD}{\mathcal D}
\newcommand{\T}{{\mathds T}}
\DeclareMathOperator {\val}{val}
\DeclareMathOperator {\trop}{trop}
\DeclareMathOperator {\Trop}{Trop}
\DeclareMathOperator {\refi}{ref}
\DeclareMathOperator {\conj}{conj}
\DeclareMathOperator {\VV}{Vert}
\DeclareMathOperator {\EE}{Edge}
\DeclareMathOperator {\ma}{mark}
\newcommand {\fix}{\text{fix}}
\title {Tropical real Hurwitz numbers}
\author{Hannah Markwig and Johannes Rau}
\address{Universit\"at des Saarlandes, Fachrichtung
  Mathematik, Postfach 151150, 66041 Saarbr\"ucken, Germany }
\email{hannah@math.uni-sb.de, johannes.rau@math.uni-sb.de}
\thanks{\emph{2010 Mathematics Subject Classification:} Primary 14T05, Secondary 14N10.}
\keywords {Tropical geometry, Hurwitz numbers, enumerative geometry}
\begin{document}

\begin{abstract}
In this paper, we define tropical analogues of real Hurwitz numbers, 
i.e.\ numbers of covers of surfaces with compatible involutions satisfying prescribed ramification properties. 
We prove a correspondence theorem stating the equality of the tropical numbers with their real counterparts.
We apply this theorem to the case of double Hurwitz numbers 
(which generalizes our result from \cite{GPMR14}). 
\end{abstract}

\maketitle

\section{Introduction}

\label{sec-intro}
\subsection{Result}
We study tropicalizations of covers of surfaces with compatible orientation-reversing involutions and define tropical analogues of real Hurwitz numbers, i.e.\ numbers of such covers satisfying fixed ramification data. We prove a correspondence theorem stating the equality of tropical real Hurwitz numbers and real Hurwitz numbers. 
As in the complex case (i.e.\ without the extra data of real involutions), 
the basic idea is that tropical covers can be considered as graphical representations of
pair-of-pants decompositions or of the terms in the well-known degeneration formula for Hurwitz numbers 
(see e.g.\ \cites{BBM10, Cav10}). 

\subsection{Motivation and background}
The definition of Hurwitz numbers as numbers of ramified covers of a fixed target curve satisfying prescribed conditions goes back to Hurwitz and has since then provided fruitful connections between various areas of mathematics such as algebraic geometry, representation theory, the theory of random matrices and mathematical physics (see e.g.\ \cites{elsv:hnaiomsoc, OP, OP06}). Real versions of Hurwitz numbers have been considered, e.g.\ in \cite{Cad05}. 
They also appear in the study of topological field theories \cite{AS06}. At first glance, they lack an important feature that complex Hurwitz numbers have: they are not invariants of the position of the chosen branch points. In light of the success of tropical methods for the study of real analogues of numbers of plane curves (which is summarized in the next subsection), we believe that the study of tropical methods for real Hurwitz numbers that we initiated in \cite{GPMR14} with Guay-Paquet and continue here will be fruitful for the further research in this area.

\subsubsection{Tropical Hurwitz numbers}
Tropical versions of (complex) double Hurwitz numbers of $\P^1$ have been studied in \cite{CJM10} using an approach via the symmetric group. A more general correspondence theorem obtained with topological methods was proved in \cite{BBM10}. Tropical Hurwitz numbers appear in the tropical computation of genus zero Zeuthen numbers, i.e.\ numbers of curves satisfying point conditions and tangency conditions to fixed lines \cite{BBM11}. Tropical double Hurwitz numbers are useful to prove statements about the structure of double Hurwitz numbers \cite{cjm:wcfdhn}. 

In \cite{GPMR14}, together with Guay-Paquet we define 
tropical real double Hurwitz numbers with positive branch points
and study their combinatorial properties.

In this paper, we generalize the correspondence theorem to arbitrary covers.
Our approach is similar to the open Hurwitz numbers approach in \cite{BBM10}.

\subsubsection{Real enumerative geometry and tropical geometry}
Tropical geometry is a powerful tool for the study of enumerative problems \cite{Mi03}. It has particular success in real enumerative geometry and the study of Welschinger invariants \cites{IKS06, Shu06b}. The latter can be viewed as analogues of plane Gromov-Witten invariants, i.e.\ of numbers of (complex) nodal plane curves of some fixed degree and genus satisfying point conditions. More precisely, when counting complex curves the number we obtain does not depend on the particular choice of the point conditions, as long as they are in general position. For real curves this is no longer true. However, when we count each curve with a suitable sign depending on the nodes, we obtain an invariant count, the Welschinger number \cite{Wel05}. Obviously, it is a lower bound for the number of real curves. The tropical approach provides algorithms to determine Welschinger invariants. It can also be used for interesting statements on the asymptotics when compared to Gromov-Witten invariants (e.g.\ \cite{IKS05}).

The situation is very similar for real Hurwitz numbers. As mentioned above, these numbers do depend
on the position of the branch points (cf.\ \ref{ex-realbranchpoints}). Hence the question
arises whether a suitable sign rule exists to make the count invariant. 
In this context we would like to mention work of Itenberg and Zvonkine (not yet published) in which the authors
define such a signed count of real polynomials (i.e.\ real Hurwitz numbers of genus $0$ and with a total ramification point)
and show the invariance of the position of the branch points. 
The questions how these signed counts relate to the tropical picture and whether such signed counts
also exist for more general Hurwitz numbers is subject of current research.

\subsection{Organization of this paper}
In section \ref{sec-realcovers}, we introduce covers of surfaces and numbers of such covers satisfying fixed ramification data. We concentrate on the case 
of real Hurwitz numbers, i.e.\ numbers of covers of surfaces with compatible 
orientation-reversing involutions. In section \ref{sec-tropicalcurves}, we introduce tropical curves, covers and real structures
which turn out to be natural counterparts of real covers. We also introduce tropical real Hurwitz numbers. In section \ref{sec-corres}, we prove our correspondence theorem
stating the equality of real Hurwitz numbers with their tropical counterparts. In section \ref{sec-realbranch}, we focus on the case of covers of $\P^1$
with only real branch points. In this case, we can boil down our general definition of real tropical Hurwitz numbers to a more combinatorial recipe. We also recover
the correspondence theorem of \cite{GPMR14} dealing with double Hurwitz numbers with positive real simple branch points.

\subsection{Acknowledgements}
The first author is supported by DFG-grant MA 4797/1-2 and GIF grant no. 1174-197.6/2011. 
The second author would like to thank the Universit\'e de Gen\`eve for the hospitality during his stay.
We would like to thank Erwan Brugall\'e, Ilia Itenberg and Grisha Mikhalkin for helpful discussions. We also thank Maxim Karev and an anonymous referee for helpful remarks on  earlier versions of this paper.

\section{Real covers and Hurwitz numbers}\label{sec-realcovers}
We start by recalling the general definition of (complex) Hurwitz numbers.
Fix a genus $g$, a degree $d$ and a collection $\mu=(\mu_1,\ldots,\mu_n)$ of $n$ partitions of $d$.
Let $\mathcal D$ be a connected oriented closed compact topological surface of genus $h$
and fix $n$ points $p_1, \ldots, p_n \in \mathcal D$.
We want to count ramified covers of degree $d$, i.e.\ maps $f:\mathcal C\rightarrow \mathcal D$ 
where $\mathcal C$ is a connected orientable closed compact topological surface of genus $g$ and 
$f$ is a continuous map which restricts to a degree $d$ covering over $\mathcal D \setminus \{p_1, \ldots, p_n\}$ and which has ramification profile $\mu_i$ over $p_i$ (i.e.\ the multiset of
ramification indices of the preimages of $p_i$ equals $\mu_i$). 
Given two such covers $f:\mathcal C\rightarrow \mathcal D$ and $f':\mathcal C' \rightarrow \mathcal D$,
an isomorphism of covers is a homeomorphism $\varphi : \mathcal C \rightarrow \mathcal C'$ such that 
$f = f' \circ \varphi$. 
Moreover, for our purposes it will sometimes be useful to add markings to the ramification points. 
More precisely, a \emph{marking} of a cover $f$ is a choice of labellings
$q_{i,1}, \ldots, q_{i,l(\mu_i)}$ for the preimage $\{q_{i,1}, \ldots, q_{i,l(\mu_i)}\} = f^{-1}(p_i)$ of each branch point such that the ramification index
at $q_{i,j}$ is $\mu_{i,j}$ (we consider $\mu_i$ as a vector here). 
We require that an isomorphism of marked covers respects the labels, i.e.\ $\varphi(q_{i,j}) = q'_{i,j}$.

\begin{definition}\label{def:complexhn}
  We define the \emph{Hurwitz number} $H^{\C}_{g}(\mathcal D,\mu)$ to be the weighted number of 
	isomorphism classes of covers $f:\mathcal C\rightarrow \mathcal D$. 
	Each cover is weighted by $\frac{1}{|\Aut(f)|}$.
	Analogously, we let $H^{\ma, \C}_{g}(\mathcal D,\mu)$ be the number of marked covers
  (with respect to marked iso-/automorphisms).
\end{definition}

\begin{remark}
$H^{\C}_{g}(\mathcal D,\mu)$ is non-zero only if the prescribed data $g$, $h$ and $\mu$
satisfies the Riemann-Hurwitz formula. We will assume this in the following. \\
Let 
\[
  \Aut(\mu) = \prod_{i=1}^n \{\sigma \in S_{l(\mu_i)} : 
	  \mu_{i,\sigma(j)} = \mu_{i,j} \; \forall \; j \}
\]
be the group of automorphisms of $\mu$.
Let $f^{\ma}$ be a marked cover and let $f^\text{unm}$ be the corresponding unmarked
cover. The action of $\Aut(f^\text{unm})$ on the labels of $f^{\ma}$ gives a group
homomorphism
\[
  \kappa : \Aut(f^\text{unm}) \to \Aut(\mu).
\]
We have $\ker(\kappa) = \Aut(f^{\ma})$ and $[\Aut(\mu) : \text{Im}(\kappa)] = m$, where 
$m$ is the number of markings for $f^\text{unm}$ (up to isomorphism). 
Hence $|\Aut(f^\text{unm})| \cdot m = |\Aut(f^{\ma})| \cdot |\Aut(\mu)|$ and
\[
  H^{\ma, \C}_{g}(\mathcal D,\mu) = |\Aut(\mu)| \cdot H^{\C}_{g}(\mathcal D,\mu).
\]
Note that $H^{\C}_{g}(\mathcal D,\mu)$ does not depend on the position of the $n$ branch points, 
nor on the concrete topological surface $\mathcal D$
(as any other choice $(\mathcal D', p'_1, \ldots, p'_n)$ is homeomorphic 
to the given one). \\
Given a complex structure on $\mathcal D$, for each $f : \mathcal C \rightarrow \mathcal D$ there exists a unique complex structure on $\mathcal C$ such that $F$ is a holomorphic ramified cover. 
Hence $H^{\C}_{g}(\mathcal D,\mu)$ can also be interpreted as the count of (non-constant) holomorphic maps
of compact Riemann surfaces with fixed ramification profile. However, we stick to the topological viewpoint
as this simplifies the gluing construction later.
\end{remark}

We now add the data of an orientation-reversing involution $\iota_{\mathcal D}:\mathcal D\rightarrow \mathcal D$, i.e.\ $\iota_{\mathcal D} \circ \iota_{\mathcal D}=\id$. The fixed point set of the involution is called the \emph{real locus} of $\mathcal D$. We consider \emph{real covers}, i.e.\ 
triples $(\mathcal C, f, \iota_{\mathcal C})$ such that $f:\mathcal C \rightarrow \mathcal D$ is a cover as before, $\iota_{\mathcal C}$ is an orientation-reversing involution on $\mathcal C$ and the compatibility condition
$\iota_{\mathcal D}\circ f= f \circ \iota_{\mathcal C}$ holds. 
For such covers to exist, we need that $\mathcal P=\{p_1,\ldots,p_n\}$ is invariant under $\iota_{\mathcal D}$
(not necessarily pointwise), which we assume from now on.
An isomorphism of real covers is required to satisfy 
$\iota_{\mathcal C'}\circ \varphi= \varphi \circ \iota_{\mathcal C}$.

\begin{definition}\label{def-realhn}
 The \emph{real Hurwitz number} $H^{\R}_{g}((\mathcal D,\iota_{\mathcal D}),\mathcal P, \mu)$ 
 is defined to be the  weighted number of real covers 
 $(\mathcal C, f, \iota_{\mathcal C})$ up to isomorphism. 
 Each cover is weighted by $\frac{1}{|\Aut(f)|}$.
	Analogously we define $H^{\ma, \R}_{g}((\mathcal D,\iota_{\mathcal D}),\mathcal P, \mu)$ 
	to the number	of marked covers (with respect to marked real iso-/automorphisms).
\end{definition}

As before, we have
\[
  H^{\ma, \R}_{g}((\mathcal D,\iota_{\mathcal D}),\mathcal P, \mu) =
	  |\Aut(\mu)| \cdot H^{\R}_{g}((\mathcal D,\iota_{\mathcal D}),\mathcal P, \mu).
\]

\begin{remark}\label{rem:independence}
The real Hurwitz number $H^{\R}_{g}((\mathcal D,\iota_{\mathcal D}),\mathcal P, \mu)$ is invariant under homeomorphism respecting the involutions and the branch points.
More precisely, $$H^{\R}_{g}((\mathcal D,\iota_{\mathcal D}),\mathcal P, \mu)= H^{\R}_{g}((\mathcal D',\iota_{\mathcal D'}),\mathcal P', \mu)$$ (where $\mathcal P'=(p_1',\ldots,p_n')$) if there is a homeomorphism
$h:\mathcal D\rightarrow \mathcal D'$ satisfying $h\circ \iota_{\mathcal D}= \iota_{\mathcal D'} \circ h$ and $h(p_i)=p_i'$.
\end{remark}

  Instead of ramified covers of closed surfaces, we can equivalently consider unramified coverings of surfaces
	with boundary. The transition is made by removing open discs around each branch point as well as its preimages
	resp.\ gluing in discs (with a marked point) to the boundary circles. The ramification index of a point
	translates to the degree of the map between the corresponding boundary circles. 
	This framework is referred to as \emph{open} Hurwitz numbers in \cite{BBM10} and is more appropriate to 
	describe the gluing constructions needed in the following. We will therefore adopt this viewpoint from now on.
	Moreover, we need to define the following refined real Hurwitz numbers for which we also prescribe the
	behaviour of the involution at the ramification points. Let us be more precise.

	Consider the (unramified) cover $f : \mathcal C \rightarrow \mathcal D$ of surfaces with boundary
	and equipped with orientation-reversing involutions such that 
	$f\circ \iota_{\mathcal C}= \iota_{\mathcal D} \circ f$.
	Let $\mathcal B = \{B_1,\ldots,B_n\}$ be the collection of boundary circles of $\mathcal D$ and
	$B_{i,1}, \ldots, B_{i,l(\mu_i)}$ be the boundary circles of $\mathcal C$ mapped to $B_i$ (hence
	we fix a marking of the cover). By slight abuse of notation, we denote by $\iota$ the map between
	indices such that $\iota_{\mathcal D}(B_i) = B_{\iota(i)}$ and 
	$\iota_{\mathcal C}(B_{i,j}) = B_{\iota(i,j)}$. 
	
	Note that on any boundary circle which is invariant under $\iota$, the involution has 
	exactly two fixed points. 
	If $B_{i,j}$ is a boundary circle which is invariant under $\iota_{\mathcal C}$ 
	(i.e.\ $\iota(i,j) = (i,j)$) then $B_i$ is $\iota_{\mathcal D}$-invariant and 
	the preimage of a fixed point of $B_i$ in $B_{i,j}$ is a $\iota_{\mathcal C}$-invariant set
	of cardinality $\mu_{i,j}$. 
	Hence the number of $\iota_{\mathcal C}$-fixed points in this set is $1$ if 
	$\mu_{i,j}$ is odd and $0$ or $2$ if $\mu_{i,j}$ is even. 
	In particular, if $\mu_{i,j}$ is even, both fixed points of $B_{i,j}$ are mapped to the same
	fixed point in $B_i$. This gives rise to a map
	\[
	  F : \{(i,j) \in \Fix(\iota): \mu_{i,j} \equiv 0 \; \mod 2\} \to \Fix(\iota_{\mathcal D}) \cap (B_1 \cup \ldots \cup B_n).
	\]
	It assigns to each $\iota$-invariant boundary circle $B_{i,j}$ with $\mu_{i,j}$ even the unique
	fixed point in $B_i$ to which both upstairs fixed points are mapped.

\begin{center}
	\label{fig-oddevenmap}
	\input{pics/oddmap.TpX}
	\input{pics/evenmap.TpX}
\end{center}

\begin{definition}\label{def:openhn}
Fix $g,d, \mu$. Fix $\iota$ as above with $\mu_{\iota(i,j)} = \mu_{(i,j)}$.
Let $\mathcal D$ be a surface of genus $h$ with $n$ boundary circles $B_1,\ldots,B_n$ 
and with orientation-reversing involution $\iota_{\mathcal D}$ such that 
$\iota_{\mathcal D}(B_i) = B_{\iota(i)}$. 
Finally, fix a map
	\[
	  F : \{(i,j) \in \Fix(\iota) : \mu_{i,j} \equiv 0 \; \mod 2\} \to \Fix(\iota_{\mathcal D}) \cap (B_1 \cup \ldots \cup B_n)
	\]
with $F(i,j) \in B_i$.
We define the \emph{refined real Hurwitz number} 
$H^{\refi, \R}_{g}((\mathcal D,\iota_{\mathcal D}), \mu, \iota, F)$ to be the weighted number 
of marked (unramified) real covers $f:\mathcal C \rightarrow \mathcal D$ 
of given genus and ramification profile and such that
\begin{itemize}
  \item $\iota_{\mathcal C}(B_{i,j}) = B_{\iota(i,j)}$,
	\item $f(\Fix(\iota_{\mathcal C} \cap B_{i,j})) = F(i,j)$ (if $\mu_{i,j}$ is even).
\end{itemize}
Each cover is weighted by one over the number of automorphisms (of marked real covers). 
\end{definition}

The independence statement of the representative in a homeomorphism class (respecting involutions and boundary circles) analogous to remark 
\ref{rem:independence} also holds here.

\section{Tropical curves, real structures and covers}\label{sec-tropicalcurves}

A tropical curve can be thought of as the metrization of a graph. 
More precisely, we start with a topological graph, i.e.\
a connected topological space $C$ which locally around each point $p$ is homeomorphic 
to a star with $r$ halfrays. 
The number $r$ is called the \emph{valence} of the point $p$ and denoted by $\val(p)$.
We require that there are only finitely many points with $\val(p)\neq 2$.
Additionally, we equip $C$ with a weighting, i.e.\
a function $g:C\rightarrow \N$ which is non-zero at finitely many points.
We call the number $g(p)$ the \emph{genus} at $p$. 
We require $g(p)=0$ if $val(p)=1$.
We refer to the points with $\val(p)>2$ or $g(p)>0$ as the \emph{(inner) vertices} of $C$. 
We assume that $C$ has at least one inner vertex. 
The points with $\val(p)=1$ are called \emph{leaves}.
The set of vertices and leaves is denoted by $\VV(C)$.
Finally, we equip $C\setminus\{\text{leaves}\}$ with a complete inner metric. 
$C$ together with this metric  is called a \emph{tropical curve}. 
By abuse of notation, the underlying graph with vertex set $\VV(C)$ is also denoted by $C$. 
Correspondingly, we can speak about \emph{edges} and \emph{flags} of $C$. 
The set of edges is denoted by $\EE(C)$.
We think of edges as subsets in $C$. After removing leaves, any edge is either
isometric to a closed interval $[0,a]$ or a halfray $[0,\infty)$. Edges of the 
first type are called \emph{inner edges}, and $a$ is called their \emph{length}. 
Edges of the second type are called \emph{ends}.
Note that the metric on $C$ is completely determined by the lengths for all inner edges.

 
  In a variant of this definition, we require all ends to have a finite length in $\R_\geq$ 
	just as the internal edges. The resulting object is called an \emph{open tropical curve}. 
	Of course, we could combine these two definitions and allow partially open tropical curves 
	for which some ends are infinitely long and some ends have a finite length. 
	We avoid this to keep notation simpler.
  
 An isomorphism of tropical curves is a homeomorphism respecting the metric data and the genus function.

The \emph{genus} of a tropical curve $C$ is defined to be the sum of the first Betti number of $C$ as a graph and $\sum_{v\in C} g(v)$.

\begin{definition}
  \label{def:prerealstructure}
  Let $C$ be an (open) tropical curve. A \emph{prereal structure} on $C$ is given by an automorphism $\iota : C \to C$ with $\iota^2 = \id$. A tropical curve with a prereal structure is called a prereal tropical curve.
\end{definition}

Given a prereal structure, we denote by $\VV_\fix (C)$ the set of vertices $v$ with $\iota(v) = v$ and by 
$\EE_\fix (C)$ the set of edges $e$ with $\iota|_e = \id_e$.

In the following, we enhance a prereal tropical curve with more data similar to metrized complexes (see e.g.\ \cites{AB12, ABBR13}). 
For this purpose, we denote by $S_{g,n}$ an oriented topological surface of genus $g$ and with $n$ boundary circles for a given genus $g\in \N$ and a number $n$.

\begin{definition}
  \label{def:realstructure}
	Let $C$ be a tropical curve with prereal structure $\iota$. A \emph{real structure} on $C$ is given by the 
	following additional data. 
	\begin{enumerate}
		\item 
	    For each $v \in \VV_\fix(C) $ we consider the
    	oriented topological surface $S_v := S_{g(v),\val(v)}$ and fix
    	\begin{itemize}
	    	\item 
	    	  a labelling of the boundary circles of $S_v$ by the flags adjacent to $v$
					(we denote the boundary circle corresponding to the flag $f$ by $B_f$),
	    	\item
	    	  a orientation-reversing involution $\iota_v$ on $S_v$,
    	\end{itemize}
    	such that labelling and involutions are compatible, i.e.\ $\iota_v(B_f) = B_{\iota(f)}$
			for all flags $f$. 
		\item
			For each edge $e \in \EE_\fix(C)$ given by the two flags $f, f'$ adjacent to $v, v'$,
			we consider the two fixed points of  $\iota_v|_{B_f}$ resp.\ $\iota_{v'}|_{B_{f'}}$
			and choose a identification between them (one of the two possible). We can now speak 
			of the \emph{fixed points of $e$} and denote this set (of two elements) by $F_e$.

	\end{enumerate}
	A tropical curve together with a fixed real structure $(C,\iota_C,(S_v)_{v\in \VV(C)})$ is called a \emph{real tropical curve}. 
	By abuse of notation, we sometimes denote a (pre-)real tropical just by $C$.
	\end{definition}

\begin{remark}
  \label{rem:realstrindsurf}
	The definition of a real structure on a tropical curve $C$ is tailored such that we can construct a ``global'' topological surface
	with orientation-reversing involution from it. This is done in two steps.
	\begin{enumerate}
		\item 
		  For each vertex $\VV(C) $ we take oriented copy of $S_v := S_{g(v), \val(v)}$
			and extend the labellings and real involutions to the case of non-fixed vertices, 
			i.e.\ for each $v \notin \VV_\fix(C)$, we also fix a labelling of the boundary circles
			of $S_v$ by the flags adjacent to $v$ and we pick orientation-reversing
			homeomorphisms $\iota_v : S_v \to S_{\iota(v)}$ 
			with $\iota_v \circ \iota_v  = \id_{S_v}$ and $\iota_v(B_f) = B_{\iota(f)}$ for all flags $f$. 
			Additionally, if $v $ is a leaf vertex adjacent to an infinitely long edge, we mark the disc $S_v$ by choosing a point 
			$p_v \in \text{Int}(S_v)$ such that $\iota_v(p_v) = p_{\iota(v)}$.
		\item
      For each edge $e \in \EE(C)$ given by the two flags $f, f'$ adjacent to $v, v'$,
			we glue the surfaces $S_v$ and $S_{v'}$ along the boundary circles $B_f$ and $B_{f'}$ via 
			a homeomorphism $g_f : B_f \to B_{f'}$ such that
			\begin{itemize}
				\item 
				  $g_f$ reverses orientations (in order to make the orientations of the glued surfaces compatible),
				\item
				  $g_f$ is compatible with the local real involutions, i.e.\ the diagram
			    \begin{equation}
			      \begin{CD}
              B_{f} @>{\iota_{v}}>> B_{\iota(f)}\\
              @V{g_{f}}VV @VV{g_{\iota(f)}}V\\
              B_{f'} @>>{\iota_{v'}}> B_{\iota(f')}\\
            \end{CD}
			    \end{equation}
			    commutes,
				\item
				  if $e \in \EE_\fix(C)$, then the identification of the fixed points chosen in 
					part (b) of definition \ref{def:realstructure} agrees with the one given by $g_f$
					(by the previous condition $g_f$ maps fixed points to fixed points).
			\end{itemize}
	\end{enumerate}
	Obviously, homeomorphisms $g_f$ satisfying these conditions exist. After gluing we obtain
	a oriented topological surface $\CCC$ of genus $g(C)$, with marked points labelled by the leaves of $C$ (in the case of an open tropical curve, with boundary circles labelled by the leaves of $C$). Moreover, the local real involutions
	can be glued as well and give rise to a global real involution $\iota_\CCC : \CCC \to \CCC$.
\end{remark}

A \emph{morphism} $\varphi:C\rightarrow D$ \emph{of (open) tropical curves} is a harmonic map of metric graphs satisfying the local Riemann-Hurwitz condition at each point. More precisely, $\varphi$ is surjective and piecewise integer affine linear, the slope of $\varphi$ on a flag or edge $e$ is called the \emph{weight} $\omega(e)\in \N_{>0}$. 
(The case of contracted edges is not relevant for our purposes.) The harmonicity (also referred to as balancing) states that for each point $v\in C$, the number $d_v=\sum_{f\mapsto f'} \omega(f)$ (where $f'$ is a chosen flag adjacent to $\varphi(v)$ and the sum goes over all flags $f$ mapping to $f'$) does not depend on the choice of $f'$. This number is called the \emph{local degree of $\varphi$ at $v$}. The local Riemann-Hurwitz condition states that when $v\mapsto v'$ with local degree $d$ 
$$2-2g(v)= d(2-2g(v'))-\sum (\omega(e)-1),$$ where the sum goes over all flags $e$ adjacent to $v$. 

\begin{definition}\label{def:complexlocalhn}
Let $\varphi:C\rightarrow D$ be a morphism of (open) tropical curves, and assume that both $C$ and $D$ contain only one inner vertex $v$ resp.\ $v'$. 
Then the data specified in \ref{def:complexhn} to define a (complex) Hurwitz number is encoded in the tropical cover: for each flag $f$ 
of $v'$, we can define a vector $\mu_f$ of ramification indices by collecting the weights of the flags of $v$ mapping to $f$. 
By harmonicity (resp.\ balancing), the $\mu_f$ are all partitions of the same degree, namely the local degree of $\varphi$ at $v$. 
The local Riemann-Hurwitz condition at $v$ implies that the Riemann-Hurwitz formula is satisfied for ramified covers matching the data.
We denote $H^{\C}(\varphi,v)= H^{\ma, \C}_{g(v)}(S_{v'}, \mu)$.
Let us emphasize that the covers contributing here are marked, where we
use the flags of $C$ as labels (as the entries of $\mu$ are labelled by these flags as well). \\
For a general morphism $\varphi:C\rightarrow D$ and a vertex $v$ of $C$, we can cut the edges adjacent to $v$ and also the 
edges adjacent to $\varphi(v)$ in $D$, thus producing local morphisms of open tropical curves $\varphi_v:C_v\rightarrow D_{\varphi(v)}$ 
satisfying the requirement from above (here, $C_v$ denotes the link of $C$ at $v$, and $\varphi_v$ the restriction of $\varphi$ to this link). 
We denote the corresponding \emph{local Hurwitz number} as above by $H^{\C}(\varphi,v)$.
\end{definition}

\begin{definition}
  \label{def:realcover}
  Let $\iota_C$ and $\iota_D$ be prereal structures for tropical curves $C$ resp.\ $D$. We say $\varphi:C\rightarrow D$ is
	a \emph{prereal morphism} if it satisfies $\iota_D \circ \varphi = \varphi \circ \iota_C$. \\
Let $D$ be a real tropical curve. A \emph{real tropical cover} of $D$ is a prereal curve $C$ and
	a prereal morphism $\varphi:C\rightarrow D$
	together with a map
	\[
	  F : \{e \in \EE_\fix(C) : \omega(e) \equiv 0 \; \mod 2\} \to \bigcup_{e' \in \EE_\fix(D)} F_{e'}
	\]
	such that $F(e) \in F_{\varphi(e)}$.
	We often denote the cover just by $\varphi$. An \emph{isomorphism} of real covers $\varphi, \varphi'$
	is a prereal isomorphism $\alpha : C \to C'$ such that $\varphi = \varphi' \circ \alpha$ and such that
	$F(e) = F'(\alpha(e))$ for all even edges $e$.
\end{definition}
The analogous definition is made for the case of open tropical curves.

\begin{definition}
  \label{def:specialHN}
  Fix a real tropical cover $\varphi:C\rightarrow D$ of a (open) real tropical curve $D$, and assume that both $C$ and $D$ contain only one inner vertex $v$ resp. $v'$, which are necessarily both fixed vertices for $C$ and $D$.
   Notice that the data we need to specify to obtain a refined Hurwitz number as in definition \ref{def:openhn} is encoded in the real tropical cover: The vectors of ramification indices are determined by the weights of $\varphi$, as in definition \ref{def:complexlocalhn}. 
   The local Riemann-Hurwitz condition implies that the Riemann-Hurwitz formula is satisfied for a cover matching the ramification data as in definition \ref{def:openhn}, resp.\ more precisely, for a cover where we add in punctured discs for the boundary circles. The choice of fixed points is given by the map $F$, as the notation suggests.
   Thus, we can define $H^{\R}(\varphi,v)= H^{\refi, \R}_{g(v)}((S_{v'}, \iota_{v'}), \mu, \iota, F)$.
   Let us emphasize again that the labels used to mark the covers contributing to 
	$H^{\R}(\varphi,v)$ correspond to the flags of $C$. \\
   As in definition \ref{def:complexlocalhn}, we define the \emph{local Hurwitz number} $H^{\R}(\varphi,v)$ 
   of a real tropical cover $\varphi:C\rightarrow D$ at a fixed vertex $v$ of $C$ to be the number associated 
   to the open cover that we obtain by cutting the edges adjacent to $v$ and to its image and considering the morphism 
   restricted to the link at $v$, so that we obtain a cover whose source and target both contain only one vertex, as required above. \\
   Note that the numbers $H^{\R}(\varphi,v)$ depend on the real structure we fixed for the tropical curve $D$.
   \end{definition}

\begin{definition}  \label{def:multiplicityrealcover}
 Let $\varphi :C\rightarrow D$ be a real tropical cover.  
   Set $EE := \{e \in \EE_\fix(C) : \omega_\varphi(e) \text{ even}\}$, let $\EE_{\conj}(C)$ be the set of unordered pairs $(e,e')$ of edges of $C$ satisfying $\iota_C(e)=e'$ and $\VV_{\conj}(C)$ the set of unordered pairs $(v,v')$ of vertices of $C$ satisfying $\iota_C(v)=v'$. 
	We define the \emph{multiplicity} of $\varphi$ to be $ m(\varphi) :=$
	$$   \frac{2^{|EE|}}{\Aut(\varphi)} 
		  \prod_{v \in \VV_{\fix}(C)} H^\R(\varphi,v) 
		  \prod_{(e,e') \in \EE_{\conj}(C)} \omega_\varphi(e) 
			\prod_{(v,v')\in \VV_{\conj}(C)} H^\C(\varphi,v).	$$
\end{definition}
Note that this is well-defined: since $\varphi$ is compatible with the involutions, $\omega(e)=\omega(e')$ for a tuple 
$(e,e') \in \EE_{\conj}(C)$. This also implies that $H^\C(\varphi,v)=H^\C(\varphi,v')$ for a tuple $(v,v')\in \VV_{\conj}(C)$. Note also that the multiplicity
depends on the choice of real structure for $D$, since the local Hurwitz numbers depend on $S_v$.

\begin{definition}\label{def-realtrophn}
As before, fix $g,d,\mu$. Let $(D,\iota_D, (S_v)_{v\in \VV(C)})$ be a real tropical curve
with $n$ leaves $l_1, \ldots, l_n$.
We define the \emph{real tropical Hurwitz number} $H^{\trop}_g((D,\iota_D, (S_v)_{v\in \VV(C)}),\mu)$
to be the number of marked real tropical covers $\varphi:C\rightarrow D$
of given genus, degree and ramification (i.e.\ the multiset of weights of the leaves of $C$ 
mapping to $l_i$ is equal to $\mu_i$),
counted with multiplicity as defined in \ref{def:multiplicityrealcover}. 
\end{definition}

\begin{remark}
  In complete analogy to section \ref{sec-realcovers}, we can count
	marked instead of unmarked covers. Here, a marking of tropical cover
	is a labelling $l_{i,1}, \ldots, l_{i,l(\mu_i)}$ of the leaves mapping
	to $l_i$ such that $\omega(l_{i,j}) = \mu_{i,j}$. 
	The multiplicity of a marked cover is given by the same formula as in definition
	\ref{def:multiplicityrealcover}, where $\Aut(\varphi)$ denotes the group of 
	automorphisms respecting the marking.
	Let $H^{\ma, \trop}_g((D,\iota_D, (S_v)_{v\in \VV(C)}),\mu)$ be the corresponding number.
	As before we have
	\[
	  H^{\ma, \trop}_g((D,\iota_D, (S_v)_{v\in \VV(C)}),\mu) =
		  |\Aut(\mu)| \cdot H^{\trop}_g((D,\iota_D, (S_v)_{v\in \VV(C)}),\mu).
	\]
\end{remark}

\section{The correspondence theorem}\label{sec-corres}
In this section, we state and prove the correspondence theorem declaring the equality of real Hurwitz numbers as defined in \ref{def-realhn} to their tropical counterparts defined in the
previous section, definition \ref{def-realtrophn}. 
The result can easily be generalized to open or partially open Hurwitz numbers
(i.e.\ allowing surfaces with boundary), but we avoid this here for the sake of simplicity.
\begin{theorem}
  \label{thm:equivalence}
	Let $(D,\iota_D, (S_v)_{v\in \VV(C)})$ be a real tropical curve. Let $(\DDD, \iota_\DDD, \mathcal P)$ be the
	associated topological surface with real structure and with punctures, according to remark
	\ref{rem:realstrindsurf}. Fix a genus $g \geq 0$ and for each leaf $i$ of $D$ a ramification profile $\mu_i$.
	Then
	\[
	  H^\R_g((\DDD, \iota_\DDD), \mathcal P ,\mu) =H^{\trop}_g((D,\iota_D, (S_v)_{v\in \VV(C)}),\mu).
	\]
\end{theorem}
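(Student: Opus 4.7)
The plan is to follow the usual degeneration/cut-and-glue strategy: the tropical curve $D$ encodes a pair-of-pants decomposition of $\DDD$ (the vertices correspond to the pieces $S_v$ and the edges to the boundary circles along which they are glued), and similarly any real cover of $\DDD$ will be decomposed by pulling back this decomposition. The correspondence theorem then reduces to a bijection (counted with weights) between real covers of $\DDD$ and the data of a tropical cover $\varphi:C\to D$ together with local refined real covers at fixed vertices, local complex covers at conjugate pairs, and gluing data along the edges.

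I would first set up the forward direction. Given a real tropical cover $\varphi:C\to D$ with fixed-point map $F$, at every fixed vertex $v\in \VV_{\fix}(C)$ pick a marked refined real cover of $(S_{\varphi(v)},\iota_{\varphi(v)})$ realizing the ramification data and fixed-point map dictated by $\varphi$ and $F$; these are counted by $H^{\R}(\varphi,v)$. At every conjugate pair $(v,v')\in\VV_{\conj}(C)$ pick a marked complex cover of $S_{\varphi(v)}$ at $v$ and transport it by the involution of $\DDD$ to $v'$; these are counted by $H^{\C}(\varphi,v)=H^{\C}(\varphi,v')$. Then glue along each edge. For a conjugate pair of edges $(e,e')$ there are $\omega_\varphi(e)$ gluing homeomorphisms of the corresponding boundary circles modulo those obtained by rotation of the covering disc (and the gluing of $e'$ is forced by the involution). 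For a fixed edge $e$ of odd weight there is a unique equivariant gluing of the two boundary circles; for even weight there are exactly two equivariant gluings, corresponding precisely to the two possible fixed-point identifications in $F_{\varphi(e)}$, which explains the factor $2^{|EE|}$. Dividing by $|\Aut(\varphi)|$ for the tropical automorphisms yields exactly the multiplicity $m(\varphi)$ from Definition \ref{def:multiplicityrealcover}, and summing over $\varphi$ produces $H^{\trop}$.

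For the reverse direction, given a marked real cover $f:\CCC\to\DDD$, I would let $C$ be the dual graph to the decomposition of $\CCC$ obtained as the pullback of the pair-of-pants decomposition of $\DDD$ encoded by $D$: vertices are the connected components of the preimages of the $S_v$, edges are the preimages of the boundary circles in $\DDD$ (with weight equal to the degree of the induced circle cover), and lengths are inherited from $D$. The orientation-reversing involution $\iota_\CCC$ descends to an involution of $C$ making it a prereal tropical curve and making the induced $\varphi:C\to D$ a prereal morphism. The genus assignment is forced by Riemann--Hurwitz on each piece. For each even fixed edge, the involution on the corresponding boundary circle in $\CCC$ maps both of its fixed points to the same fixed point of the corresponding circle in $\DDD$, which defines the map $F$. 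Checking harmonicity and the local Riemann--Hurwitz condition at every vertex is immediate from the topological description.

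The two constructions are mutually inverse up to isomorphism, so summing the sizes of the fibers of the forward map against the local and gluing choices recovers exactly the real Hurwitz number. The main technical obstacle is a careful bookkeeping of automorphism groups and of the gluing choices along the edges: one must verify that when a real tropical cover has a nontrivial automorphism, the corresponding collection of local covers is overcounted by exactly $|\Aut(\varphi)|$, and in particular that the swap of a conjugate pair of edges or vertices is correctly absorbed. Equally delicate is checking that for a fixed edge with even weight, the two equivariant gluings of the boundary circles indeed produce non-isomorphic real covers except in the presence of automorphisms, matching the factor $2^{|EE|}/|\Aut(\varphi)|$. Once this bookkeeping is in place, the equality of the two weighted counts follows from the purely local identifications $H^{\R}(\varphi,v)=H^{\refi,\R}_{g(v)}((S_{\varphi(v)},\iota_{\varphi(v)}),\mu,\iota,F)$ and $H^{\C}(\varphi,v)=H^{\ma,\C}_{g(v)}(S_{\varphi(v)},\mu)$ already built into the definitions.
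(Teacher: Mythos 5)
Your overall strategy coincides with the paper's: cut $\CCC$ along the preimages of the circles of the pair-of-pants decomposition encoded by $D$ to get the tropicalization, and reconstruct covers by gluing local pieces counted by $H^{\R}(\varphi,v)$ and $H^{\C}(\varphi,v)$. The tropicalization direction as you describe it is exactly step 1 of the paper's proof and is fine. The genuine gap is that the heart of the theorem --- the claim that for a \emph{fixed} real tropical cover $\varphi$ (with its map $F$ already part of the data) the weighted number of real covers $\Phi$ with $\Trop(\Phi)=\varphi$ equals $m(\varphi)$ --- is precisely the part you defer as ``bookkeeping to be put in place''. Multiplying the number of local covers by the number of admissible gluings and dividing by $|\Aut(\varphi)|$ is not a legitimate count of isomorphism classes: distinct gluings can yield isomorphic covers, the glued covers may carry automorphisms (so they enter $H^{\R}_g$ with weight $1/|\Aut(\Phi)|$, not $1$), and the local Hurwitz numbers are themselves weighted by local automorphisms. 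The paper resolves exactly this by rigidifying both sides: it fixes reference points $p(e)$ on the circles $D_{\varphi(e)}$ for conjugate edge pairs and enriches both the fibre $\AAA=\Trop^{-1}(\varphi)$ and the set $\BBB$ of tuples of local covers with ``gluing data'' (a choice of preimage of $p(e)$, and a choice of one of the two upstairs fixed points for each even fixed edge). The gluing construction then becomes an honest bijection between $\BBB'$ and $\AAA'$, the forgetful maps are $\lambda$-to-one resp.\ $\lambda^2$-to-one in the weighted sense with $\lambda = 2^{|EE|}\prod_{(e,e')\in\EE_{\conj}(C)}\omega_\varphi(e)$, and the automorphism comparison is handled by the surjection $\Aut(\Phi)\to\Aut(\varphi)$ whose kernel is the group of local automorphisms. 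Without this (or an equivalent) device, the two statements you flag as ``to be verified'' are the entire content of the proof, so the argument as written is incomplete.

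A smaller but substantive point: your explanation of the factor $2^{|EE|}$ is misattributed. For an even fixed edge $e$, the element $F(e)\in F_{\varphi(e)}$ is part of the data of the real tropical cover, so choosing between the two points of $F_{\varphi(e)}$ does not contribute to the multiplicity; different choices of $F$ give different tropical covers, each already weighted by $2^{|EE|}$, so reading the factor that way would double-count. The correct source of the factor $2$ per even fixed edge is the two equivariant ways of matching the two fixed points of $B_f$ with the two fixed points of $B_{f'}$, all four of which lie over the single prescribed point $F(e)$; your count of ``exactly two equivariant gluings'' is right, but they both realize the same $F(e)$.
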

 
Of course, by our previous remarks the same equality holds for the marked numbers.

\begin{proof}
  The main strategy of the proof is analogous to \cite{BBM10}*{Theorem 2.11} and we focus on the 
	necessary generalizations for the real case here. 
  We present the proof in three steps:
	
	\emph{1. step:}
	
	Let us start with a real ramified cover $\Phi : (\CCC, \iota_\CCC) \to (\DDD, \iota_\DDD)$.
	The first step of the proof is to construct a tropical real cover $\varphi : C \to D$ associated
	to $\Phi$. Let us start by constructing the underlying graph of $C$. Remember that $\DDD$ is
	obtained from gluing simpler surfaces according to the combinatorial structure of $D$. In particular,
	$\DDD$ contains embedded circles $D_e$ labelled by the edges of $D$. If $e$ is adjacent to a leaf $i$, the circle $D_e$ bounds
	a sphere with one marked point and $\Phi$ is ramified only above these marked points (according to $\mu_i$).
	For any $e$, $\Phi^{-1}(D_e)$ is a collection of embedded circles in $\CCC$. We call a \emph{component} of $\CCC$
	(resp.\ of $\DDD$) the closure of a connected component in the surface minus all the embedded circles.
	$C$, as a graph, consists of a vertex for each component of $\CCC$ and an edge for each embedded circle, connecting
	the two vertices whose components it is adjacent to. 
	
	By our construction, $\Phi$ maps every component resp.\ embedded circle of $\CCC$ to a unique component resp.\ embedded circle 
	of $\DDD$. This defines a map $\varphi : C \to D$ on the level of graphs. To make $C$ a fully-fledged tropical curve and 
	$\varphi$ a morphism, it suffices to determine the weights $\omega_\varphi(e)$ for all edges $e$ of $C$. Together
	with the metric structure on $D$ this determines uniquely the metric/tropical structure on $C$. The weights, in turn,
	are given by the formula
	\[
	  \omega_\varphi(e) = \deg(\Phi : D_e \to D_{\varphi(e)}),
	\]
	where the right hand side denotes the degree of the corresponding $S^1$-cover.
	
	We now add a prereal structure to $C$. From the compatibility condition $\iota_\CCC \circ \Phi = \Phi \circ \iota_\DDD$
	it follows that $\iota_\CCC$ maps components to components and embedded circles to embedded circles (as $\iota_\DDD$ 
	does the same on $\DDD$ by construction). This gives an involution $\iota_C :C \to C$ which turns $C$ into a prereal curve.
	Note that 
	\[
	  \omega_\varphi(e) = \omega_\varphi(\iota_C(e)),
	\]
	hence $\iota_C$ respects the lengths of the edges of $C$. Moreover, again by construction it is clear that $\varphi$
	is a prereal morphism satisfying the condition on the weights of the ends as prescribed by the ramification data.
	
	Finally, the last piece of information missing is the map
	\[
	  F : \{e \in \EE_\fix(C) : \omega(e) \equiv 0 \; \mod 2\} \to \bigcap_{e \in \EE_\fix(D)} F_e
	\]
	As explained before definition \ref{def:openhn}, for each $e \in \EE_\fix(C)$  
	with even weight the map $\Phi : D_e \to D_{\varphi(e)}$ is of even degree and hence
	maps both fixed points of $D_e$ to the same fixed point of $D_{\varphi(e)}$.
  But remember that we can identify $F_{\varphi(e)}$ with the two fixed points of $D_{\varphi(e)}$.
	We set $F(e)$ to be the image of the two fixed points of $D_e$.
	
	\emph{2. step:}
	
	So far, we constructed a map $\text{Trop}$ 
	from the set of all real ramified covers $\Phi$ of $\DDD$ to the set of
	all tropical real covers $\varphi$ of $D$. The next step is to analyse the fibres of this map.
	So let us fix a tropical real cover $\varphi : C \to D$ and set 
	\[
	  \AAA := \text{Trop}^{-1}(\varphi). 
	\]
	For each vertex $v \in \VV^\fix(C)$, let $\BBB(v)$ be the set of 
	covers contributing to the local real Hurwitz number $H^\R(\varphi, v)$. 
	For each pair of conjugated vertices $v,v'$ of $C$, let $\BBB(v,v')$ be the set
	of covers contributing to the local complex Hurwitz number $H^\C(\varphi, v)$. We set
	\[
		  \BBB := 
			\prod_{v \in \VV_{\fix}(D)} \BBB(v) 
			\times
			\prod_{(v,v')\in \VV_{\conj}(D)} \BBB(v,v').		
	\]
	We want to compare $\AAA$ and $\BBB$, which is easier after adding some additional structure. 
	Namely, for each pair of conjugated edges $e,e'$ of $C$
	let us fix a point $p(e) \in D_{\varphi(e)}$ (and its conjugate 
	$p(e') \in D_{\varphi(e')}$). Now let $\AAA'$ be the fibre of $\varphi$ under $\Trop$, with the additional choice of
	a point in $\Phi^{-1}(p(e))$ for each such pair
	and with the additional choice of one of the two fixed points in $\Phi^{-1}(D_{\varphi(e)})$ for each 
	even $e$ in $\EE_\fix(C)$. We call these choices a \emph{gluing data}.
\begin{center}
\label{fig-gluingdata}
\input{pics/gluingdata1.TpX}
\input{pics/gluingdata2.TpX} \\
Examples of gluing data
\end{center}	
	An isomorphism of elements in $\AAA'$ is a usual 
	isomorphism which respects the gluing data. The map from $\AAA'$ to $\AAA$ forgetting
	the gluing data is essentially $\lambda$-to-1, where
	\[
	  \lambda = 2^{|EE|} \cdot
		  \prod_{(e,e') \in \EE_{\conj}(D) } \omega_\varphi(e).
	\]
	More precisely, after dividing out isomorphisms, we have
	\[
	  |\AAA' / \text{isom.} | = \lambda \cdot |\AAA / \text{isom.}|,
	\]
	where $|.|$ denotes the weighted cardinality with weights $\frac{1}{\Aut}$.
	We can add the same gluing data to each factor of $\BBB$ and obtain the set
	$\BBB'$ of local covers with gluing data such that
	\[
	  |\BBB' / \text{isom.} | = \lambda^2 \cdot |\BBB / \text{isom.}|.
	\]
	Here, an isomorphism is given by an isomorphism in each factor. 
	
	In a third step, we now show 
	\[
	  |\BBB' / \text{isom.} | = \Aut(\varphi) \cdot |\AAA' / \text{isom.} |.
	\]
	This finishes the proof, as the above formulas imply
	\begin{equation*}
	  \begin{split}
	  H^{\trop}_g((D,\iota_D, (S_v)_{v\in \VV(C)}),\mu) 
		&= \sum_\varphi \frac{\lambda}{\Aut(\varphi)} \cdot |\BBB / \text{isom.}| \\
		&= \sum_\varphi |\AAA / \text{isom.}| \\
		&=	H^\R_g((\DDD, \iota_\DDD), \mathcal P ,\mu).
		\end{split}
	\end{equation*}
	
	\emph{3. step:}
	
	In order to show 	
	\[
	  |\BBB' / \text{isom.} | = \Aut(\varphi) \cdot |\AAA' / \text{isom.} |,
	\]
  note that the additional structure of gluing data allows us to construct a bijection between
	$\BBB'$ and $\AAA'$. 
	An element of $\BBB'$ hands us a cover 
	$\Phi_v : S_v \to S_{\varphi(v)}$ for each $v \in C$, contributing to the local refined Hurwitz numbers. (For a pair of conjugated vertices
	$v,v'$, we are only given $\Phi_v$. We then set $S_{v'} := S_v$ (with opposite orientation) and
	$\Phi_{v'} := \iota_{\varphi(v)} \circ \Phi_v$.) 
	We also have an involution on the disjoint union of all the $S_v$ such that $S_v$ is mapped
	to $S_{\iota_C(v)}$ and the compatibility with $\iota_D$ holds.
	We now glue the boundary circles of the $S_v$ according to the combinatorial structure of $C$. 
	We do this in such a way that the local covers $\Phi_v$ can be extended to a global map $\Phi$
	on the glued surface $\CCC$. 
	In order to fix such a gluing, 
	for each edge $e$ and associated pair of boundary circles $B_f$ and $B_{f'}$ we have to choose points
	$p \in B_f$ and $p' \in B_{f'}$ which are mapped to the same point in $D_{\varphi(e)} \subset \mathcal D$. 
	Additionally, if $e \in \EE_\fix(C)$, in order to extend the involutions $\iota_v$ and $\iota_{v'}$
	to the glued surface, the necessary and sufficient condition is that the gluing identifies 
	fixed points of $B_f$ with fixed points of $B_{f'}$.
	If $e$ has odd weight, the above conditions leave us with a unique choice for the gluing
	(as over each fixed point downstairs there is a unique pair of fixed points upstairs, which hence
	has to be identified).
	For an even edge $e$ in $\EE_\fix(C)$ resp. a pair of conjugated edges $e,e'$,
	we choose the unique gluing which identifies the pairs of points specified in the gluing data
  (note that in the latter case, the gluing at $e$ fixes the gluing at $e'$).
	\begin{center}
    \label{fig-gluing}
    \input{pics/gluing.TpX}
  \end{center}	
  The image of the marked points in the new circles defines a gluing data for $\Phi$.
  As our gluing data was chosen to be invariant under the involution, we can indeed extend the involution
	to $\CCC$ and hence, in total, we constructed an element in $\AAA'$.
	The inverse map is given by starting with an element $\Phi$ in $\AAA'$ and cut it into local pieces
	along the immersed circles (with gluing data induced from the gluing data on $\Phi$). \\	
	It remains to discuss isomorphisms. Let $\Phi : \CCC \to \DDD$ be an element in
	$\AAA'$. Let $\Psi : \CCC \to \CCC$ be a real automorphism of $\Phi$. 
	The way it maps components (of $\CCC$) to components and embedded circles to embedded circles
	defines a prereal automorphism $\alpha : C \to C$ of $\varphi$. This gives a surjective
	group homomorphism 
	\[
	  \Aut(\Phi) \to \Aut(\varphi).
	\]
	The kernel of this map consists of automorphisms which keep the components and
	boundary circles fixed, i.e.\ which split into local automorphisms
	on the components. This is exactly the group of automorphisms of the element in $\BBB'$
	corresponding to $\Phi$. This proves the claim.	
\end{proof}

The correspondence theorem for real Hurwitz numbers yields a graphical organization of the covers to be counted, in terms of tropical covers. 
It can also be viewed as a graphical organization of a degeneration formula for real Hurwitz numbers. 
In particular, it allows to express arbitrary real Hurwitz numbers in terms of (real) triple Hurwitz numbers 
of the sphere (i.e.\ Hurwitz numbers of the sphere with three branch points).

\section{Covers of $\P^1$ with real branch points}\label{sec-realbranch}

In this section, we focus on the case of real double Hurwitz numbers of $\P^1$ with only real branch points.
In this case, the triple Hurwitz numbers which are needed as input data for the combinatorial enumeration of covers (in terms of real tropical covers)
can be computed explicitly, allowing a completely combinatorial treatment of the counting problem.
In the case where all branch points besides $0$ and $\infty$ are real, we recover the combinatorial recipe discussed in \cite{GPMR14} using the approach
via the symmetric group. We enrich this result by a combinatorial treatment of the arbitrary case, where also negative branch points are allowed.

We need the following input data. 
We fix two natural numbers $d, g$ and two partitions of $d$, called $\lambda = (\lambda_j)$ and $\nu=(\nu_i)$. 
We set $r := 2g -2 + \ell(\lambda) + \ell(\nu)$, this is the number of additional simple branch points a generic cover of $\P^1$ with ramification profiles $\lambda$
at $0$ and $\nu$ at $\infty$ has, by the Riemann-Hurwitz formula. 

We fix $r$ points in $\R$, say $x_1 < \ldots < x_r$ and equip
each of these points with a sign $s(i) = s(x_i) \in \{+,-\}$. To this data we associate the following real tropical
curve $L$. Regardless of the real structure, the curve is just (the standard model of) $\T\P^1=\R\cup \{\pm \infty\}$ modified at the points $p_i$. Hence $L$ is
a caterpillar tree with two special ends labelled by $\pm \infty$ and $r$ ends labelled by $1, \ldots, r$.
Moreover, $w_i$ denotes the vertex obtained from modifying at $x_i$. All vertices have genus $0$. 
The real involution on $L$ is just identity, $\iota_L = \id$.
Finally, for each vertex $v_i$, let $f_{\pm \infty}$ denote the two flags pointing towards
$\pm \infty$ and let $f_i$ be the third flag (corresponding to the end $l_i$). The real structure
on $S_{v_i} = S_{0,3}$ is induced from the standard real structure on $\C\P^1$. 
Corresponding to $f_{-\infty}, f_{+\infty}$ and $f_i$, we remove open discs around $0$, $\infty$ and a third point 
$p_i \in \R \setminus \{0\} \subseteq \R\P^1$ whose sign is equal to $s(i)$.
On $B_{f_{\pm \infty}}$, call the two fixed points positive resp.\ negative, depending
on whether they touch the positive or negative part of $\R \setminus \{0\}$. 
For each edge $e \in L$, we identify the two positive and the two negative fixed points.
This gives $L$ the structure of a real tropical curve. Obviously, the real topological
surface associated to $L$ is completely determined by the numbers $n^+$ resp.\ $n^-$ of positive resp.\ negative signs $s(i)$.
Namely, the surface is obtained from $\C\P^1$ with standard real structure and punctures at $0, \infty$, $n^+$
punctures on the positive and $n^-$ punctures on the negative part of $\R\P^1 \setminus \{0,\infty\}$.
	\begin{center}
    \label{fig-curveL}
    \input{pics/curveL.TpX}
  \end{center}	
Our goal is to count real tropical covers of $L$ of genus $g$ and degree $d$ with ramification
profiles $\lambda$ resp.\ $\nu$ at $l_{-\infty}$ resp.\ $l_{+\infty}$ and
simple ramification at all other leaves. 
That is, we determine the tropical real Hurwitz numbers $H^{\trop}_g((L,\id_L, (S_v)_{v\in \VV(L)}),\mu))$, with the real structure $(S_v)_{v\in \VV(L)}$
as described above, where $$\mu=(\lambda, \nu, (2,1,\ldots,1),\ldots,(2,1,\ldots,1)).$$ 
In these situations, the count can be simplified in the following way: for each real tropical cover $\varphi:C\rightarrow L$ contributing to 
$H^{\trop}_g((L,\id_L, (S_v)_{v\in \VV(L)}),\mu))$, we contract the 
ends of $L$ adjacent to the leaves $1,\ldots,r$
and the ends of $ C$ adjacent to leaves mapping to these ends. Then we consider covers of $\T\P^1$ arising like this. Notice that covers 
arising like this do not satisfy the Riemann-Hurwitz condition at every vertex, so they are not morphisms in the sense defined above. 
Nevertheless, their properties can be described easily.

For a harmonic map $\varphi:C\rightarrow \T\P^1$, a \emph{balanced wiener} is a set of two edges of the same weight and adjacent to the same 
two vertices. The set of such pairs is denoted by $W$. A \emph{balanced fork} is a set of two ends of the same weight adjacent to the same (inner) vertex. 
The set of balanced wieners and forks is denoted by $WF$.

\begin{definition}
  \label{def:signedcover}
  A \emph{signed cover} of $\T\P^1$ is a harmonic map $\varphi : C \to \T\P^1$ with a choice of subset $I \subset WF$ of the set of balanced wieners 
  and forks (we call the elements of $I$ the \emph{conjugated} wieners resp.\ forks) and a choice of sign $S(e)\in \{+,-\}$ for every 
  internal edge $e$ of even weight which is not contained in a conjugated 
  wiener. (The set of these edges is denoted by $EE$.) 
  We require that $C$ has $r$ $3$-valent vertices $v_1, \ldots, v_r$ such that $\varphi(v_i) = x_i$. \\
  Set $I' = I \cap W$, the set of conjugated wieners.
	We associate the following \emph{multiplicity} to a signed cover.
	$$  m(\varphi) := \frac{2^{|EE|}}{2^{|WF|}} 
		  \prod_{(e,e') \in I'} \omega_\varphi(e) .$$
\end{definition}

We depict conjugated wieners/forks as bold edges, positive edges normal and negative edges dashed. Edges of odd weight
which are not in a conjugated wiener/fork are drawn normal.

\begin{definition}
  \label{def:admissiblecover}
	A signed cover $(\varphi:C\rightarrow \T\P^1, I, (S(e))_{e\in EE})$ is called \emph{real} if locally at each vertex $v_i$, the cover
	equals one of the local pictures shown in the following list, depending on the sign $s(i)$ (or their reflections, 
	with two edges coming from the left and one leaving to the right).
	If $s(i)$ is negative, the local possible pictures are:
	
	\begin{center}
\begin{picture}(0,0)%
\includegraphics{vertextypes2.pstex}%
\end{picture}%
\setlength{\unitlength}{3947sp}%
\begingroup\makeatletter\ifx\SetFigFont\undefined%
\gdef\SetFigFont#1#2#3#4#5{%
  \reset@font\fontsize{#1}{#2pt}%
  \fontfamily{#3}\fontseries{#4}\fontshape{#5}%
  \selectfont}%
\fi\endgroup%
\begin{picture}(3102,1860)(4636,-7201)
\put(6301,-5686){\makebox(0,0)[lb]{\smash{{\SetFigFont{9}{10.8}{\familydefault}{\mddefault}{\updefault}{\color[rgb]{0,0,0}even}%
}}}}
\put(4651,-6736){\makebox(0,0)[lb]{\smash{{\SetFigFont{9}{10.8}{\familydefault}{\mddefault}{\updefault}{\color[rgb]{0,0,0}odd}%
}}}}
\put(5626,-6511){\makebox(0,0)[lb]{\smash{{\SetFigFont{9}{10.8}{\familydefault}{\mddefault}{\updefault}{\color[rgb]{0,0,0}odd}%
}}}}
\put(4651,-5686){\makebox(0,0)[lb]{\smash{{\SetFigFont{9}{10.8}{\familydefault}{\mddefault}{\updefault}{\color[rgb]{0,0,0}even}%
}}}}
\put(5551,-7186){\makebox(0,0)[lb]{\smash{{\SetFigFont{9}{10.8}{\familydefault}{\mddefault}{\updefault}{\color[rgb]{0,0,0}even}%
}}}}
\put(6376,-6736){\makebox(0,0)[lb]{\smash{{\SetFigFont{9}{10.8}{\familydefault}{\mddefault}{\updefault}{\color[rgb]{0,0,0}even}%
}}}}
\put(7276,-6511){\makebox(0,0)[lb]{\smash{{\SetFigFont{9}{10.8}{\familydefault}{\mddefault}{\updefault}{\color[rgb]{0,0,0}even}%
}}}}
\put(7201,-7186){\makebox(0,0)[lb]{\smash{{\SetFigFont{9}{10.8}{\familydefault}{\mddefault}{\updefault}{\color[rgb]{0,0,0}even}%
}}}}
\put(7351,-5461){\makebox(0,0)[lb]{\smash{{\SetFigFont{9}{10.8}{\familydefault}{\mddefault}{\updefault}{\color[rgb]{0,0,0}odd}%
}}}}
\put(7276,-6136){\makebox(0,0)[lb]{\smash{{\SetFigFont{9}{10.8}{\familydefault}{\mddefault}{\updefault}{\color[rgb]{0,0,0}odd}%
}}}}
\end{picture}%

\end{center}
 If $s(i)$ is positive, the local possible pictures are:
\begin{center}
\label{fig-vertextypes}
\begin{picture}(0,0)%
\includegraphics{vertextypes.pstex}%
\end{picture}%
\setlength{\unitlength}{3947sp}%
\begingroup\makeatletter\ifx\SetFigFont\undefined%
\gdef\SetFigFont#1#2#3#4#5{%
  \reset@font\fontsize{#1}{#2pt}%
  \fontfamily{#3}\fontseries{#4}\fontshape{#5}%
  \selectfont}%
\fi\endgroup%
\begin{picture}(3102,1860)(4636,-7201)
\put(6301,-5686){\makebox(0,0)[lb]{\smash{{\SetFigFont{9}{10.8}{\familydefault}{\mddefault}{\updefault}{\color[rgb]{0,0,0}even}%
}}}}
\put(4651,-6736){\makebox(0,0)[lb]{\smash{{\SetFigFont{9}{10.8}{\familydefault}{\mddefault}{\updefault}{\color[rgb]{0,0,0}odd}%
}}}}
\put(5626,-6511){\makebox(0,0)[lb]{\smash{{\SetFigFont{9}{10.8}{\familydefault}{\mddefault}{\updefault}{\color[rgb]{0,0,0}odd}%
}}}}
\put(4651,-5686){\makebox(0,0)[lb]{\smash{{\SetFigFont{9}{10.8}{\familydefault}{\mddefault}{\updefault}{\color[rgb]{0,0,0}even}%
}}}}
\put(5551,-7186){\makebox(0,0)[lb]{\smash{{\SetFigFont{9}{10.8}{\familydefault}{\mddefault}{\updefault}{\color[rgb]{0,0,0}even}%
}}}}
\put(6376,-6736){\makebox(0,0)[lb]{\smash{{\SetFigFont{9}{10.8}{\familydefault}{\mddefault}{\updefault}{\color[rgb]{0,0,0}even}%
}}}}
\put(7276,-6511){\makebox(0,0)[lb]{\smash{{\SetFigFont{9}{10.8}{\familydefault}{\mddefault}{\updefault}{\color[rgb]{0,0,0}even}%
}}}}
\put(7201,-7186){\makebox(0,0)[lb]{\smash{{\SetFigFont{9}{10.8}{\familydefault}{\mddefault}{\updefault}{\color[rgb]{0,0,0}even}%
}}}}
\put(7351,-5461){\makebox(0,0)[lb]{\smash{{\SetFigFont{9}{10.8}{\familydefault}{\mddefault}{\updefault}{\color[rgb]{0,0,0}odd}%
}}}}
\put(7276,-6136){\makebox(0,0)[lb]{\smash{{\SetFigFont{9}{10.8}{\familydefault}{\mddefault}{\updefault}{\color[rgb]{0,0,0}odd}%
}}}}
\end{picture}%

\end{center}
\end{definition}

With the following two lemmata, we classify all possible vertices of real tropical covers contributing to
$H^{\trop}_g((L,\id_L, (S_v)_{v\in \VV(L)}),\mu))$. When shrinking ends, these vertices either become $2$-valent and disappear (lemma \ref{lem:berechnung1}), or
they can be identified with 
the pictures in definition \ref{def:admissiblecover} (lemma \ref{lem:berechnung2}).

Let $T$ be an open three-valent line with standard real structure. 
That is, the involution is the identity, and the surface associated to the vertex of $T$ is $\P^1$ with three boundary circles around
punctures in the real part.

We set $g=0$ and assign the
	ramification profiles $(d)$, $(d)$ and $(1,\ldots,1)$ to the three leaves of $T$. 
	It follows from the Riemann-Hurwitz condition that any cover $\varphi : S \to T$ has one inner vertex $v$
	to which all $d+2$ ends are adjacent.
	We will now compute all non-zero local Hurwitz numbers $H^{\R}(\varphi,v)$ matching this situation.
	We use the following notation: Fixing one leaf $l$ of $T$, we denote by $\alpha(\varphi)$ 
	the number of real automorphisms of $\varphi$ which are non-trivial only on $\varphi^{-1}(l)$.
	Here, we choose $l$ to be the leaf with ramification $(1,\ldots,1)$, 
	hence $\alpha(\varphi) = |\Aut(\varphi)|$.

\begin{lemma}
  \label{lem:berechnung1}
The only non-zero local Hurwitz numbers $H^{\R}(\varphi,v)$ for a cover  $\varphi : S \to T$ of an open three-valent line $T$ with ramification profiles $(d)$, $(d)$ and 
$(1,\ldots,1)$ are given in the following list. 
(Note that the map $F$ (the choice of fixed points for each even edge) is encoded in the following pictures
by marking the points in $\text{Im}(F)$ in red.)
	\begin{itemize}
		\item If $d$ is odd then
			\begin{center}
				\input{pics/onevertex1.TpX}	
				\input{pics/pairofpants.TpX}
			\end{center}
			\[ H^\R(\varphi,v) = \alpha(\varphi) = 2^{\frac{d-1}{2}} \left(\frac{d-1}{2}\right)! \]
		\item If $d$ is even: either the two fixed points for the two even fixed edges of $S$ are connected by a real arc, then
			\begin{center}
				\input{pics/onevertex2.TpX}
				\input{pics/pairofpants1.TpX}
			\end{center}
			\[  H^\R(\varphi,v) = \frac{1}{2} \cdot \alpha(\varphi) = 2^{\frac{d}{2}-1} \left(\frac{d}{2}\right)! \]
		\item or the fixed points are endpoints of the two real arcs leading to the boundary circle for which we impose ramification $(1,\ldots,1)$, then
			\begin{center}
				\input{pics/onevertex3.TpX}
				\input{pics/pairofpants2.TpX}
			\end{center}
		\[ H^\R(\varphi,v) = \frac{1}{2} \cdot \alpha(\varphi) = 2^{\frac{d-2}{2}} \left(\frac{d-2}{2}\right)! \]
	\end{itemize}
\end{lemma}

\begin{remark} \label{unmarkednumbers}
  Note that the lemma just provides a sophisticated way
	of saying
	\[
    H^\R_0((\C\P^1, \conj), \{0,1,\infty\}, ((d),(d),(1,\ldots,1))) = 1,
	\]
	where in the even case two covers of weight $1/2$ contribute. Remember that
	in the complex case we have 
	$H^\C_0(\C\P^1, \{0,1,\infty\}, ((d),(d),(1,\ldots,1))) = 1/d$.
\end{remark}

\begin{proof}
  Requiring ramification profile $(1,\ldots,1)$ over one boundary circle means the map is not ramified there --- we may as well fill in the 
  corresponding discs and consider
	unramified maps between cylinders. We may identify the real structure on the target cylinder as
	\begin{equation*}
		\begin{split}
      \iota : S^1 \times [0,1] & \to S^1 \times [0,1], \\
			        (z,t)              & \mapsto (\overline{z},t).
		\end{split}
	\end{equation*}
	There is exactly one (non-real) cover of degree $d$ up to isomorphism, namely 
	\begin{equation*}
		\begin{split}
      \varphi' : S^1 \times [0,1] & \to S^1 \times [0,1], \\
			           (z,t)              & \mapsto (z^d,t).
		\end{split}
	\end{equation*}
	It has $d$ (non-real) automorphisms. 
	A compatible real structure on the source cylinder is given by
	choosing its fixed point locus of the form $\{\pm \rho\} \times [0,1]$,
	where $\rho$ is a $2d$-th root of unity. We have $d$ choices for that. 
	If $d$ is odd, these choices are killed by the $d$ automorphisms above.
	If $d$ is even, one of the automorphisms is given by $z \mapsto -z$ and hence
	keeps the fixed point locus unchanged. We are thus left with two choices, 
	corresponding to $\rho^d = \pm 1$, and each of these has a non-trivial automorphism.
	It easy to see that these cases correspond to the three real tropical covers
	shown in the statement. 
	It remains to count the number of markings in each case, i.e.\ the number
	of ways we can label the preimages of the ``trivial'' branch point by
	the flags of $S$ such that the involutions are compatible. This number
	is exactly $\alpha(\varphi)$.
\end{proof}

Now we consider again an open three-valent line $T$ with standard real structure, and set $g=0$. 
We assign the
	ramification profiles $(d)$, $(a,b)$ and $(2,1,\ldots,1)$ to the three leaves of $T$. Again, by the Riemann-Hurwitz formula,
	the only covers $\varphi:S\rightarrow T$ have one inner vertex $v$. The possibilities for local Hurwitz numbers
	matching this requirements are listed below.
As before, the surface associated to the vertex of $T$ is a sphere with three boundary circles.
The possibilities for local Hurwitz numbers in this case depend on the choice of orientation for the target
	pair of pants or, in other words, to the two possible cyclic orderings
	of the three branch points on the real locus (after fixing the orientation). 
	In lemma \ref{lem:berechnung1}, we make the choice requiring ramification $(d)$ over $\infty$, ramification $(a,b)$ over $0$ and simple
	ramification over $-1$ with $0,-1,\infty \in \C\P^1$ with standard real structure and orientation.
	The opposite choice is treated analogously.

Again, we denote by $\alpha(\varphi)$ the number of real automorphisms of 
$\varphi$ which are non-trivial only on $\varphi^{-1}(l)$,
where now $l$ is the leaf with simple ramification $(2,1,\ldots,1)$. 
In other words, $\alpha(\varphi) = |\Aut(\varphi)|$ if $a \neq b$ and 
$\alpha(\varphi) = |\Aut(\varphi)|/2$ if $a = b$.

\begin{lemma}
  \label{lem:berechnung2}
  The only non-zero local Hurwitz numbers $H^{\R}(\varphi,v)$ for a cover  $\varphi : S \to T$ of an open three-valent line $T$ with ramification profiles 
  $(d)$ (over $\infty$), $(a,b)$ (over $0$) and 
$(2,1,\ldots,1)$ (over $-1$) are given in the following list. 
Again, we encode the map $F$ by marking the points in $\text{Im}(F)$ in red
(in the third case, where we have two fixed even edges mapping to the right boundary circle, the same fixed point
is chosen, indicated by a ``$2\times$'' in the picture).
\begin{itemize}
		\item If $d$ is odd, then
			\begin{center}
				\input{pics/onevertex21.TpX}	
				\input{pics/pairofpants21.TpX}
			\end{center}
			\[  H^\R(\varphi,v) = \alpha(\varphi) = 2^{\frac{d-3}{2}} \left(\frac{d-3}{2}\right)! \]
		\item If $d$ is even, and $a,b$ odd, then
			\begin{center}
				\input{pics/onevertex22.TpX}	
				\input{pics/pairofpants22.TpX}
			\end{center}
			\[ H^\R(\varphi,v) = \alpha(\varphi) = 2^{\frac{d-2}{2}} \left(\frac{d-2}{2}\right)! \]
		\item If $d$, $a$ and $b$ are even, then
			\begin{center}
				\input{pics/onevertex23.TpX}	
				\input{pics/pairofpants23.TpX}
			\end{center}
			\[  H^\R(\varphi,v) = \alpha(\varphi) = 2^{\frac{d-2}{2}} \left(\frac{d-4}{2}\right)! \]
		\item If $d$ is even and the involution on $a=b$ is non-trivial, then
			\begin{center}
				\input{pics/onevertex24.TpX}	
				\input{pics/pairofpants24.TpX}
			\end{center}
			\[  H^\R(\varphi,v) = \alpha(\varphi) = 2^{\frac{d-2}{2}} \left(\frac{d-2}{2}\right)! \]
	\end{itemize}
\end{lemma}

\begin{remark}
  Again the lemma states in a sophisticated way that 
	\[
    H^\R_0((\C\P^1, \conj), \{0,1,\infty\}, ((d),(a,b),(2,1,\ldots,1))) = 1,
	\]
	where in the even case $a=b$ two covers of weight $1/2$ contribute. 
\end{remark}

\begin{proof}
  Let us first show that the four cases in the statement are indeed the only ones 
	with non-zero Hurwitz number.
  Disregarding the real structures on the source surface, 
	there is a unique homeomorphism type of cover
	of given ramification which can be described by the map
	\begin{equation*} 
		\begin{split}
      \varphi' : \C & \to \C, \\
			           z  & \mapsto \lambda (z-1)^a (z+1)^b,
		\end{split}
	\end{equation*}
	with $\lambda = (-1)^{a-1}\frac{d^d}{2^d a^a b^b}$ (the simple ramification point is at $z_s=\frac{b-a}{d}$). 
	This cover has no non-trivial automorphisms,
	except for $a=b$, when there is exactly one (given by $z \mapsto -z$). We now have to 
	analyse the possibilities to equip the source Riemann sphere with a real structure $\iota$ compatible with
	$\varphi'$. 
	Let us focus on the fixed point sets for $\iota$. First note that $z_s$ must always be a fixed point, since
	it is the only ramification point over $-1$. Near $z_s$, the preimage $\varphi'^{-1}(\R\P^1)$
	looks like the union of two smooth arcs --- one of them the standard real line $\R\P^1$, the other
	one with tangent direction $\pm i$. 
	Notice that the cyclic order of the ramification points on the standard real line is $\infty, -1,z_s,1$.
	The fixed point set of $\iota$ must be a circle that contains one of these two
	arcs. 
	In the first case $\Fix(\iota) = \R\P^1$, we obtain the standard involution $\conj : z \mapsto \overline{z}$.
	This corresponds to the first three cases from above.
	Let us now consider the second case, i.e.\ $\Fix(\iota)$ contains the arc with tangent direction
	$\pm i$. Note that this arc, and hence $\Fix(\iota)$, is invariant under $\conj$. 
	It follows that $\Fix(\iota)$ intersects $\R\P^1$ in exactly one further point, which must 
	be $\pm 1$ or $\infty$. The remaining two ramification points must be exchanged by $\iota$ since they do not belong to the fixed circle.
	It follows from the cyclic ordering of the four ramification points on the standard real line that the point of intersection must be $\infty$
	and $\iota$ must exchange $\pm 1$.	
	
 	\begin{center}
   	\input{pics/reallocus.TpX}		
 	\end{center}		
	
	This is only possible if $a=b$. It follows that in this case $\iota$ must be equal to $z \mapsto -\overline{z}$. This 
	corresponds to the fourth case in the list of the statement.
	
	So far, we proved that the four cases in the statement are indeed the only non-zero Hurwitz number and in each
	case their is only one (unmarked) cover that contributes. It remains to calculate the number of markings
	and automorphisms. Note that when $a=b$, the choice of marking the two preimages of $0$ is cancelled by
	the (unmarked) automorphism $z \mapsto -z$. What remains is the number of markings for the preimages
	of the simple branch point, but this is exactly $\alpha(\varphi)$. 
\end{proof}

\begin{construction}\label{const:shrink}
 
 Let $\varphi:C\rightarrow L$ be a real tropical cover contributing to a Hurwitz number $$H^{\trop}_g((L,\id_L, (S_v)_{v\in \VV(L)}),\mu)),$$ 
 where $L$ is a real tropical line as
 defined at the beginning of this section.
 
 Shrink the ends adjacent to the leaves marked with $1,\ldots, r$ in $L$. Accordingly shrink the ends of $C$ mapping to these ends, producing a 
 prereal tropical curve $C'$ and a harmonic map $\varphi':C'\rightarrow \T\P^1$. We drop two-valent vertices that appear as a consequence of shrinking 
 ends and merge their adjacent edges to one edge of the appropriate length.
 Notice that the Riemann-Hurwitz condition implies that we obtain a three-valent curve $C'$ with $r$ inner vertices.
 The involution of $C$ induces an involution on $C'$, whose non-fixed locus can only consist of wieners and balanced forks. We choose the conjugated edges in
 the non-fixed locus as the set $I$ of conjugated wieners resp.\ balanced forks.
 We choose as sign for the fixed even edges of $C'$ the sign of the chosen fixed point for these edges. Notice that it follows from lemma 
 \ref{lem:berechnung1} that this is well-defined: if an even fixed edge of $C'$ originally comes subdivided by a $2$-valent vertex, then the two 
 adjacent edges of $C$ which are not shrunk have the same fixed point.
 
 In terms of our drawing conventions, this means that we 
 draw the fixed even edges of $C'$ whose chosen fixed point is drawn at the bottom of the circle as dotted lines, and those whose fixed point is drawn at the top of the circle as normal lines.
 The conjugated edges are drawn bold.
 
 \end{construction}

\begin{proposition} \label{prop-counting}
 The procedure described in construction \ref{const:shrink} is a bijection between the set of 
real tropical covers contributing to $H^{\trop}_g((L,\id_L, (S_v)_{v\in \VV(L)}),\mu))$ 
and real signed covers whose weights of ends are given by $\lambda$ and $\nu$.
Moreover, the multiplicity of a real tropical cover $\varphi:C\rightarrow L$ 
and its image $\varphi':C'\rightarrow \T\P^1$ are the same.
 %
\end{proposition}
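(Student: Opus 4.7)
My plan is to split the proof into two parts: first, establishing that Construction \ref{const:shrink} defines a bijection, and second, verifying that the multiplicities agree. The bijection is essentially a reorganization of data, while the multiplicity comparison requires careful bookkeeping against Lemmas \ref{lem:berechnung1} and \ref{lem:berechnung2}.

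For the bijection, the key input is the classification of local covers in the two lemmata. For any vertex $v$ of $C$ mapping to $w_i \in L$, the non-vanishing of the local Hurwitz number $H^\R(\varphi, v)$ severely restricts the local structure. If $v$ carries a flag of weight $2$ going to the end $l_i$, Lemma \ref{lem:berechnung2} forces $v$ into one of four types, each corresponding precisely to one of the local pictures of Definition \ref{def:admissiblecover} (with case selection governed by the sign $s(i)$, which is encoded in the real structure $S_{w_i}$). Otherwise, Lemma \ref{lem:berechnung1} shows $v$ is $2$-valent (a single through-flag on each side, plus fully unramified ends to $l_i$), and the construction drops this vertex; in the even case, the two possible fixed-point choices from Lemma \ref{lem:berechnung1} correspond exactly to the two signs $S(e)$ on the resulting merged edge. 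The well-definedness of $S(e)$ along an edge subdivided by several $2$-valent fixed vertices follows since adjacent fixed-point choices must agree. The inverse construction reinserts, at each $3$-valent vertex $v_i$ of $C'$, the end going to $l_i$ (with a weight-$2$ flag at $v_i$ and $d-2$ weight-$1$ flags on $2$-valent vertices inserted along the neighbouring edges); the local pictures of Definition \ref{def:admissiblecover} together with Lemma \ref{lem:berechnung2} uniquely determine the local cover, and the sign data $S(e)$ determines the map $F$ at the reinserted $2$-valent fixed vertices.

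For the multiplicity comparison, I substitute the explicit values of $H^\R(\varphi, v)$ from Lemmas \ref{lem:berechnung1} and \ref{lem:berechnung2} (and of $H^\C(\varphi, v)$ from the analogous complex computation) into $m(\varphi)$ and compare with $m(\varphi')$. At each $3$-valent vertex $v_i$, the factor $H^\R(\varphi, v_i) = \alpha(\varphi)$ cancels with the corresponding local portion of $\Aut(\varphi)$, namely the automorphisms supported on the marked weight-$1$ ends going to $l_i$. At each dropped $2$-valent fixed vertex, $H^\R(\varphi, v)$ equals $\alpha(\varphi)$ in the odd case and $\tfrac{1}{2} \alpha(\varphi)$ in the even case; the $\alpha$-factors absorb the corresponding automorphisms while the remaining $\tfrac{1}{2}$'s assemble into $\tfrac{1}{2^{|WF|}}$, with the count $|WF|$ matching the number of balanced wieners and forks in $C'$. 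An analogous cancellation occurs at each conjugated pair of vertices via the marked complex Hurwitz numbers. After these cancellations only the global automorphism group of $\varphi'$ remains in the denominator, and $2^{|EE|}$ differs from $2^{|EE'|}$ only by fixed even edges inside a conjugated wiener, which combine with $\prod_{(e,e') \in \EE_{\conj}(C)} \omega_\varphi(e)$ to produce the factor $\prod_{(e,e') \in I'} \omega_\varphi(e)$. The main obstacle will be correctly partitioning $\Aut(\varphi)$ into local pieces (attached to dropped $2$-valent vertices, to the weight-$1$ ends, and to the global signed cover $\varphi'$) so that these cancellations can be carried out unambiguously; once this decomposition is established, the desired equality follows.
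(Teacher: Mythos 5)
Your bijection half is essentially the paper's argument and is fine: the two lemmata classify the local pictures at all vertices (non-vanishing of the local Hurwitz number forces either the Lemma \ref{lem:berechnung1} shape, which becomes $2$-valent and is dropped, or the Lemma \ref{lem:berechnung2} shape, which matches a picture of Definition \ref{def:admissiblecover}), the sign $S(e)$ is well defined because adjacent fixed-point choices agree, and regrowing ends is unique up to the markings. One point you should make explicit is that the vertex carrying the weight-$2$ flag over $l_i$ is automatically $\iota_C$-fixed (the involution preserves weights and $(2,1,\ldots,1)$ has a single part equal to $2$), since Lemma \ref{lem:berechnung2} is a statement about \emph{real} local Hurwitz numbers and only applies at fixed vertices.

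The multiplicity half, however, has a genuine error in the bookkeeping. The leftover factors $\tfrac12$ at the dropped even fixed $2$-valent vertices cannot assemble into $2^{-|WF|}$: their number is the number of subdivision vertices lying on fixed even edges of $C'$, which is unrelated to the number of balanced wieners and forks (a fixed even edge crossing many branch points yields many halves and no wiener or fork; a balanced fork yields no dropped vertex at all; the subdivision vertices of a conjugated wiener are non-fixed and yield factors $1/\omega$, not $1/2$). In the correct accounting, $2^{-|WF|}$ equals $1/|\Aut(\varphi')|$ and arises as the residual part of $1/|\Aut(\varphi)|$ after writing $|\Aut(\varphi)| = |\Aut(\varphi')|\prod_{[v]}\alpha(\varphi,v)$ over $\iota_C$-orbits of vertices, while the halves exactly compensate the excess of $2^{|EE(C)|}$ over $2^{|EE(C')|}$: a fixed even edge of $C'$ subdivided by $k$ vertices comes from $k+1$ fixed even edges of $C$. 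Similarly, your claim that $2^{|EE(C)|}$ and $2^{|EE(C')|}$ differ by fixed even edges inside a conjugated wiener, and that this difference combines with the $\omega$-product, cannot work: edges of a conjugated wiener are exchanged by $\iota_{C'}$, so they never contribute to $EE$ at all, and the reduction of $\prod_{(e,e')\in\EE_{\conj}(C)}\omega_\varphi(e)$ to $\prod_{(e,e')\in I'}\omega_\varphi(e)$ comes instead from the factor $1/\omega$ in $H^\C(\varphi,v)=(\omega-1)!=\omega!/\omega$ at each dropped conjugate pair of $2$-valent vertices. So the decomposition of $\Aut(\varphi)$ that you defer is precisely the crux, and with the factors attributed as in your plan the two multiplicities would not match; the cancellation must be organized vertex-orbit by vertex-orbit and edge by edge as just described.
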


\begin{proof}
 We have seen in \ref{const:shrink} already that we obtain a signed cover $\varphi:C'\rightarrow \T\P^1$ from $\varphi:C\rightarrow L$ by shrinking ends. 
 Applying our drawing conventions, we see that the pictures listed in lemma \ref{lem:berechnung2} indeed yield the first four pictures of definition \ref{def:admissiblecover},
 while the analogous pictures we obtain for the opposite choice of orientation of the target surface yield the second four pictures.
 So, indeed, we obtain a real signed cover by shrinking ends. Vice versa, we produce a real tropical cover of $L$ by growing ends at the images of three-valent vertices 
 of $C'$ and their preimages. It follows from lemma \ref{lem:berechnung1} and \ref{lem:berechnung2} that there is a unique way to grow ends and extend the
 involution, up to the choice of markings of the newly attached ends.
 
 It remains to prove the statement about the multiplicity.
First note that the automorphisms groups of $\varphi$ and $\varphi'$ exactly differ
by the automorphisms which exchange the shrunken leaves. To be precise, for each vertex $v$ of $C$
let $\alpha(\varphi,v)$ be the number of automorphisms of $\varphi$ which only exchanges the shrunken
leaves adjacent to $v$ (when $v \notin \Fix(\iota_C)$, we consider non-real automorphisms and
hence $\alpha(\varphi,v) = d_\varphi(v)!$). Then
\begin{equation}
\label{eq:autom}
  |\Aut(\varphi)| = |\Aut(\varphi')| \prod_{[v] \in \text{Vert}(C)/\iota_C} \alpha(\varphi,v).
\end{equation}
Now fix $[v] \in \text{Vert}(C)/\iota_C$, shrink the adjacent leaves and consider the corresponding 
change of the multiplicity.
Let us first consider the case that after shrinking we get a three-valent vertex of $C'$ (in particular,
$v \in \Fix(\iota_C)$). By lemma \ref{lem:berechnung2}, we have $H^\R(\varphi,v) = \alpha(\varphi,v)$,
which cancels one of the factors in equation \eqref{eq:autom}. 
Let us now assume after shrinking the vertex is two-valent. Then by lemma \ref{lem:berechnung2}
the local Hurwitz number equals to $\alpha(\varphi,v)$ times a factor $1$ resp.\ $1/2$ 
if the subdivided edge is a fixed edge of odd resp.\ even weight and a factor $1/\omega$
if the subdivided edge is a non-fixed edge of weight $\omega$ 
(as $H^\C_0(\varphi,v) = \omega!/\omega = (\omega-1)!$,
c.f.\ \ref{unmarkednumbers}).
Note that this factor is exactly the inverse
of the factor such an edge contributes to $m(\varphi)$, in other words, it cancels with
one of the two corresponding factors in $m(\varphi)$. By shrinking all $[v] \in \text{Vert}(C)/\iota_C$
inductively, we get
\[
  m(\varphi) = \frac{2^{|EE(C')|}}{\Aut(\varphi')} 
		  \prod_{(e,e') \in \EE_{\conj}(C')} \omega_\varphi(e).
\]
It remains to note that the  automorphisms of $\varphi'$ are generated by exchanging
the two edges of a wiener or fork. Hence $|\Aut(\varphi')| = 2^{|WF|}$ and therefore
$m(\varphi) = m(\varphi')$ from definition \ref{def:signedcover}.
\end{proof}

\begin{corollary}

With the above notations, $H^{\trop}_g((L,\id_L, (S_v)_{v\in \VV(L)}),\mu))$
equals the number of signed real covers of $\T\P^1$ whose weights match $\lambda$ and $\nu$, 
counted with the multiplicity defined in \ref{def:signedcover}.
\end{corollary}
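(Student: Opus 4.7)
The plan is to derive this corollary directly from Proposition \ref{prop-counting}. By Definition \ref{def-realtrophn}, the tropical Hurwitz number $H^{\trop}_g((L,\id_L, (S_v)_{v\in \VV(L)}),\mu)$ is defined as the sum over real tropical covers $\varphi : C \to L$ with the prescribed genus, degree and ramification profile, each weighted by the multiplicity $m(\varphi)$ of Definition \ref{def:multiplicityrealcover}. Proposition \ref{prop-counting} provides a bijection (via construction \ref{const:shrink} and its inverse) between this set of tropical covers and the set of real signed covers $\varphi' : C' \to \T\P^1$ whose end-weights are given by $\lambda$ and $\nu$, and it asserts that $m(\varphi) = m(\varphi')$ under this bijection.

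So the proof I would give is essentially a one-liner: sum the equality of multiplicities over both sides of the bijection. More precisely, I would write
\[
  H^{\trop}_g((L,\id_L, (S_v)_{v\in \VV(L)}),\mu)
  = \sum_{\varphi} m(\varphi)
  = \sum_{\varphi'} m(\varphi'),
\]
where the first sum runs over all real tropical covers of $L$ matching the fixed discrete data, the second over all real signed covers of $\T\P^1$ whose end-weights are prescribed by $\lambda$ and $\nu$, and the equality of the two sums is guaranteed term-by-term by Proposition \ref{prop-counting}. The right-hand sum is by definition the combinatorial count of real signed covers counted with the multiplicity from Definition \ref{def:signedcover}, which is exactly what the corollary asserts.

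There is essentially no obstacle here, since all the real content -- verifying that shrinking ends produces a valid signed cover in the sense of Definition \ref{def:admissiblecover}, that every signed cover comes from a unique tropical cover up to markings, and that the multiplicities transform correctly under the local Hurwitz number computations of Lemmas \ref{lem:berechnung1} and \ref{lem:berechnung2} -- has been absorbed into Proposition \ref{prop-counting}. The only thing to double-check would be that the ramification profiles $\lambda$ and $\nu$ at the two infinite leaves $l_{\pm\infty}$ of $L$ match, under construction \ref{const:shrink}, precisely the prescribed end-weights of the resulting signed cover of $\T\P^1$, but this is immediate since those leaves are \emph{not} shrunk and the construction preserves the weights of unshrunken ends.
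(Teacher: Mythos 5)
Your proposal is correct and matches the paper's (implicit) argument: the corollary is stated as an immediate consequence of Proposition \ref{prop-counting}, obtained exactly as you do by summing the multiplicity-preserving bijection over all covers. Nothing further is needed.
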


Now recall the definition of real tropical double Hurwitz number in \cite{GPMR14}. Notice that by definition, it counts real signed covers of $\T\P^1$ for which the signs $s(i)$ 
of the branch points are all positive. The following corollary follows easily.

\begin{corollary}
 If all signs $s(i)$ for branch points are positive, the equality
$$\tilde H_g(\lambda,\nu) = 
 H^{\trop}_g((L,\id_L, (S_v)_{v\in \VV(L)}),\mu))$$
holds, where $\tilde H_g(\lambda,\nu)$ denotes the tropical real double Hurwitz number 
considered in \cite{GPMR14}.
\end{corollary}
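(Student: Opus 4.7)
The plan is to unravel the definitions and invoke the preceding corollary. By the corollary, $H^{\trop}_g((L,\id_L,(S_v)),\mu)$ equals the number of real signed covers $\varphi':C'\to\T\P^1$ (in the sense of Definition \ref{def:signedcover}) whose ends have weights matching $\lambda$ at $-\infty$ and $\nu$ at $+\infty$, counted with the multiplicity of Definition \ref{def:signedcover}. The task therefore reduces to matching this weighted count with the definition of $\tilde H_g(\lambda,\nu)$ from \cite{GPMR14}.

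First I would recall from \cite{GPMR14} the explicit combinatorial recipe defining $\tilde H_g(\lambda,\nu)$: it is a weighted count of decorated tropical covers of $\T\P^1$ with ramification $\lambda,\nu$ at $\pm\infty$ and simple ramification at $r$ additional branch points, together with a list of allowed local vertex types and a multiplicity formula. Since in \cite{GPMR14} all simple branch points are taken on the positive real line, the allowed vertex types there are precisely those obtained when every $s(i)$ is positive.

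Next I would check, vertex by vertex, that the local pictures used in \cite{GPMR14} coincide with the four local configurations of Definition \ref{def:admissiblecover} corresponding to $s(i)=+$ (and their left-right reflections). With this identification, conjugated wieners and balanced forks in Definition \ref{def:signedcover} correspond to the analogous decorations in \cite{GPMR14}, and sign assignments on fixed even internal edges correspond as well. Once the vertex types are matched, I would compare multiplicity formulas: the factor $\tfrac{2^{|EE|}}{2^{|WF|}}\prod_{(e,e')\in I'}\omega_\varphi(e)$ of Definition \ref{def:signedcover} is exactly the local weight used in \cite{GPMR14}.

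The main obstacle is purely bookkeeping: ensuring that the conventions for conjugated pairs, sign decorations on even edges, and the treatment of automorphisms used here match those of \cite{GPMR14}. No further enumerative argument is needed — once the dictionary between the two sets of decorated covers is established, the equality $\tilde H_g(\lambda,\nu)=H^{\trop}_g((L,\id_L,(S_v)),\mu)$ is immediate from the preceding corollary applied in the all-positive case.
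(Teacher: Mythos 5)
Your proposal matches the paper's argument: the paper simply observes that the definition of $\tilde H_g(\lambda,\nu)$ in \cite{GPMR14} is precisely a count of real signed covers of $\T\P^1$ with all branch point signs positive and with the same multiplicity, so the equality follows immediately from the preceding corollary. Your more detailed vertex-by-vertex matching is just an expanded version of this same definitional comparison, so the approach is essentially identical.
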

(Note that in an earlier version of \cite{GPMR14}, we consider marked versions of tropical double Hurwitz numbers, in which case the equality above becomes $\tilde H_g(\lambda,\nu) = |\Aut(\lambda)| \cdot |\Aut(\nu)| \cdot
 H^{\trop}_g((L,\id_L, (S_v)_{v\in \VV(L)}),\mu))$.)

\begin{example}\label{ex-realbranchpoints}
 In the following example, we demonstrate that the real tropical Hurwitz numbers (and with that, real Hurwitz numbers) depend on the chosen branch points, or, more concretely,
  on the chosen number of positive and negative branch points.
  We pick $g=0$, $\lambda=(5)$ and $\nu=(3,1,1)$. First consider two positive branch points. The only cover contributing to this count is depicted below. Its multiplicity
  is $2/2=1$.
  
  \begin{center}
   \begin{picture}(0,0)%
\includegraphics{ex5311++.pstex}%
\end{picture}%
\setlength{\unitlength}{3947sp}%
\begingroup\makeatletter\ifx\SetFigFont\undefined%
\gdef\SetFigFont#1#2#3#4#5{%
  \reset@font\fontsize{#1}{#2pt}%
  \fontfamily{#3}\fontseries{#4}\fontshape{#5}%
  \selectfont}%
\fi\endgroup%
\begin{picture}(2745,1036)(3439,-5975)
\put(5776,-5911){\makebox(0,0)[lb]{\smash{{\SetFigFont{11}{13.2}{\familydefault}{\mddefault}{\updefault}{\color[rgb]{0,0,0}$1$}%
}}}}
\put(4726,-5611){\makebox(0,0)[lb]{\smash{{\SetFigFont{11}{13.2}{\familydefault}{\mddefault}{\updefault}{\color[rgb]{0,0,0}$2$}%
}}}}
\put(5776,-5461){\makebox(0,0)[lb]{\smash{{\SetFigFont{11}{13.2}{\familydefault}{\mddefault}{\updefault}{\color[rgb]{0,0,0}$1$}%
}}}}
\put(3751,-5236){\makebox(0,0)[lb]{\smash{{\SetFigFont{11}{13.2}{\familydefault}{\mddefault}{\updefault}{\color[rgb]{0,0,0}$5$}%
}}}}
\put(5251,-5086){\makebox(0,0)[lb]{\smash{{\SetFigFont{11}{13.2}{\familydefault}{\mddefault}{\updefault}{\color[rgb]{0,0,0}$3$}%
}}}}
\end{picture}%

  \end{center}
Now assume the first branch point is negative and the second positive. Then we obtain the following two pictures with multiplicities $2/2=1$ and $2$, respectively.
As $1 \neq 3$, the Hurwitz number depends on the chosen signs for the branch points.
 \begin{center}
   \begin{picture}(0,0)%
\includegraphics{ex5311-+.pstex}%
\end{picture}%
\setlength{\unitlength}{3947sp}%
\begingroup\makeatletter\ifx\SetFigFont\undefined%
\gdef\SetFigFont#1#2#3#4#5{%
  \reset@font\fontsize{#1}{#2pt}%
  \fontfamily{#3}\fontseries{#4}\fontshape{#5}%
  \selectfont}%
\fi\endgroup%
\begin{picture}(2724,2536)(3439,-7475)
\put(5776,-5911){\makebox(0,0)[lb]{\smash{{\SetFigFont{11}{13.2}{\familydefault}{\mddefault}{\updefault}{\color[rgb]{0,0,0}$1$}%
}}}}
\put(3751,-6736){\makebox(0,0)[lb]{\smash{{\SetFigFont{11}{13.2}{\familydefault}{\mddefault}{\updefault}{\color[rgb]{0,0,0}$5$}%
}}}}
\put(5776,-7411){\makebox(0,0)[lb]{\smash{{\SetFigFont{11}{13.2}{\familydefault}{\mddefault}{\updefault}{\color[rgb]{0,0,0}$1$}%
}}}}
\put(5251,-6586){\makebox(0,0)[lb]{\smash{{\SetFigFont{11}{13.2}{\familydefault}{\mddefault}{\updefault}{\color[rgb]{0,0,0}$1$}%
}}}}
\put(4726,-7111){\makebox(0,0)[lb]{\smash{{\SetFigFont{11}{13.2}{\familydefault}{\mddefault}{\updefault}{\color[rgb]{0,0,0}$4$}%
}}}}
\put(5776,-6961){\makebox(0,0)[lb]{\smash{{\SetFigFont{11}{13.2}{\familydefault}{\mddefault}{\updefault}{\color[rgb]{0,0,0}$3$}%
}}}}
\put(4726,-5611){\makebox(0,0)[lb]{\smash{{\SetFigFont{11}{13.2}{\familydefault}{\mddefault}{\updefault}{\color[rgb]{0,0,0}$2$}%
}}}}
\put(5776,-5461){\makebox(0,0)[lb]{\smash{{\SetFigFont{11}{13.2}{\familydefault}{\mddefault}{\updefault}{\color[rgb]{0,0,0}$1$}%
}}}}
\put(3751,-5236){\makebox(0,0)[lb]{\smash{{\SetFigFont{11}{13.2}{\familydefault}{\mddefault}{\updefault}{\color[rgb]{0,0,0}$5$}%
}}}}
\put(5251,-5086){\makebox(0,0)[lb]{\smash{{\SetFigFont{11}{13.2}{\familydefault}{\mddefault}{\updefault}{\color[rgb]{0,0,0}$3$}%
}}}}
\end{picture}%

  \end{center}
\end{example}

\bibliographystyle{plain}
\bibliography{bibliographie}
\end {document}